%% file: main.tex
\documentclass[reqno]{asl}

\usepackage{setspace}
\title{On the Concept of Arithmetic Consequence}
\author{}
\date{}

\keywords{G\"odel incompleteness, formalism, proof-theoretic semantics, reflection principles, arithmetic, inferentialism, semantic consequence, consistency}

\thanks{2020 \emph{Mathematics Subject Classification}. 03B05, 03F03, 03A05}

\author{Alexander V. Gheorghiu }
\revauthor{Gheorghiu, Alexander V.}

\address{\textbf{ORCID:} 0000-0002-7144-6910}

\address{School of Electronics and Computer Science, University of Southampton\\
University Road, Southampton, SO17 1BJ 
United Kingdom}

\address{Department of Computer Science, University College London\\
Gower St, London WC1E 6BT, UK}

\email{a.v.gheorghiu@soton.ac.uk}

\makeatletter
\def\namedlabel#1#2{\begingroup
   \def\@currentlabel{#2}%
   \label{#1}\endgroup
}
\makeatother

\input{macros}

\begin{document}

\begin{abstract}
    G\"odel’s second incompleteness theorem is standardly understood as showing that no sufficiently strong, consistent theory of arithmetic can prove its own consistency, a result typically interpreted against a model-theoretic background in which arithmetical language is evaluated with respect to an independently given structure of natural numbers. This paper develops an alternative perspective grounded in proof-theoretic semantics. We distinguish between derivability and a semantic notion of consequence given by support, defined compositionally in terms of the inferential roles fixed by a theory. For suitable arithmetical theories $\pa$ formulated in a finite signature  (such as Robinson's \textsf{Q} and Peano Arithemtic), these two notions can diverge in a principled way: although $\pa$ does not prove its own consistency, it nevertheless supports its formalized consistency statement, and more generally supports sentences not derivable within it. This does not conflict with G\"odel's incompleteness theorem, but instead reframes incompleteness as a divergence between two internally determined notions of consequence associated with a single theory, rather than as a gap between syntactic provability and truth in a mind-independent structure. The result clarifies the relationship between reflection, consistency, and inferentialist approaches to meaning, and shows how substantial semantic determinacy may arise from the inferential structure of arithmetic itself.
\end{abstract}

\maketitle

\section{Introduction} \label{sec:introduction}

G\"odel's incompleteness theorems~\cite{godel1931undecidable} establish that any recursively enumerable 
consistent formal system $\pa$ sufficient to represent elementary number theory must leave certain 
sentences neither provable nor refutable within it. The consistency statement $\Con(\pa)$ is a 
canonical example: $\not\vdash_{\pa} \Con(\pa)$ and $\not\vdash_{\pa} \neg\Con(\pa)$. These results 
are well-known, not least for their decisive bearing on Hilbert's programme: the 
second incompleteness theorem forecloses the possibility of securing the consistency of arithmetic by 
finitary means within the system itself. G\"{o}del's own philosophical remarks~\cite{godel1951} drew 
a further consequence: since $\Con(\pa)$ is true yet unprovable, mathematical truth cannot be 
identified with formal provability, and our formal systems must be understood as answering --- with 
varying degrees of completeness --- to an independent mathematical reality.

Tarski provided what has become the standard mathematical articulation of this independence. 
Observing in ``On the Concept of Logical Consequence''~\cite{tarski1936following_eng} that the 
intuitive notion of logical consequence remained without a precise definition despite --- indeed, 
partly because of --- G\"{o}del's results, he applied his theory of truth for formal 
languages~\cite{tarski1933truth_eng} to close the gap. The central move is to distinguish two notions 
of arithmetic consequence: \emph{syntactic consequence}, what is derivable from the axioms by formal 
proof, and \emph{semantic consequence}, what holds in every model satisfying those axioms. 

The semantics in question is what is now called \emph{model-theoretic semantics}. In this framework, truth is defined relative to a structure, a \emph{model}, specified independently of any proof system. 
The divergence between syntactic consequence and semantic consequence becomes a precise theorem rather than a philosophical 
observation. G\"odel's incompleteness theorem(s) reflects that the standard model of arithmetic $\mathbb{N}$
does not admit a recursively enumerable axiomatization.

This orthodox reading came under sustained philosophical scrutiny in
Dummett's 1963 essay ``The Philosophical Significance of G\"{o}del's
Theorem''~\cite{dummett1963}. He argued that the standard interpretation
begs the central philosophical question:
\textit{Are the meaning of mathematical statements given by the conditions under which we are entitled to assert them --- that is, by their proof-conditions --- or by potentially verification-transcendent truth-conditions?}
To assert that $\Con(\pa)$ is true but
unprovable \emph{presupposes} a realist conception of arithmetic, such as $\mathbb{N}$, but it is precisely this presupposition
that is at issue. 

But what other reading is there? Dummett advances his own anti-realist account according to which the content of a mathematical statement is given by the conditions under which it can be recognised as correct. Here no automatic warrant exists for attributing to $\Con(\pa)$ a truth-value that transcends derivability. The real significance of G\"{o}del's theorem(s), in this reading, lies not in vindicating realism but in revealing that the concept of a \emph{ground for asserting something of all the natural numbers} is \emph{indefinitely extensible}: for any definite characterisation of such grounds --- that is, any recursively axiomatised formal system --- the G\"{o}delian construction yields a natural extension of it. This indefinite extensibility is, for Dummett, a form of inherent vagueness in the concept of natural number itself, and as such is perfectly compatible with the thesis that mathematical meaning is constituted by use rather than by reference to a determinate, mind-independent domain.

This proposal attracted both admiration and criticism, and prompted important clarifications from Dummett himself. Wright~\cite{wright1994} subjected its central presupposition to sustained scrutiny. Dummett's account requires that we can always produce an informal demonstration of the G\"{o}del sentence --- that recognising the truth of $\Con(\pa)$ from outside the system is always available to us. Wright argues that this presupposition faces a dilemma. What G\"{o}del's construction strictly delivers is a conditional: \textit{if} the system is consistent, \textit{then} $\Con(\pa)$ is unprovable within it; a separate consistency proof is needed to detach the consequent.  Wright also notes the risk of a simpler conflation underlying the intuitive pull of Dummett's picture: we may be confusing the \emph{discovery of a commitment} --- the recognition that consistency obliges acceptance of $\Con(\pa)$ --- with the independent demonstration of its truth.

Moore~\cite{moore1998}, broadly sympathetic to Dummett's anti-realist orientation, pushed back from a different direction. The threat that G\"{o}del's theorem poses to the thesis that meaning is use is less distinctive than Dummett supposes, since the inability to formally capture the extension of a concept by a recursive axiomatisation is a pervasive feature of natural language, not a peculiarity of arithmetic. Raatikainen~\cite{raatikainen2005} reinforced the epistemological pressure from Wright's direction, arguing that the claim that we can simply \emph{see} the G\"{o}del sentence to be true is considerably more fragile than the anti-mechanist tradition acknowledges, and that the epistemic situation is systematically obscured by the conflation Wright identifies.

These critical exchanges leave Dummett's programme in a philosophically important but technically underdeveloped state. He identified the right question---whether arithmetical correctness can be constitutively determined by inferential practice rather than by correspondence to an independently given structure---but lacked the formal framework to give a precise and rigorous answer. In the decades following his critical essay, Dummett developed his anti-realist, justificatory theory of meaning. We may call this \emph{inferentialism} (in contrast to \emph{denotationalism}); it has received mathematical articulation as \emph{proof-theoretic semantics} (in contrast to \emph{model-theoretic semantics}). 

In his paper ``Classical Logic without
Bivalence'', Sandqvist~\cite{Sandqvist2005inferentialist,Sandqvist_2009} offers a proof-theoretic semantics for classical logic. It proceeds by a
semantic judgment relation called \emph{support} ($\supp$), defined
compositionally from a base of atomic inferential commitments. The
aim of the present paper is to deploy this semantics in the arithmetical
setting, and thereby to give a technically grounded analysis of Dummett's
reading of G\"{o}del.

We show that, for any arithmetical theory $\pa$ satisfying some basic properties --- met by Robinson's $\mathsf{Q}$ and Peano
Arithmetic ($\mathsf{PA}$) --- validates its own consistency statement $\Con(\pa)$; that is,
\[
  A \Vdash \Con(\pa).
\]
This stands in apparent tension with G\"{o}del's second incompleteness
theorem, which asserts $\nvdash_{\pa} \Con(\pa)$. The resolution lies in the scope of Sandqvist's completeness theorem: it holds only
when the underlying signature contains a sufficient reserve of unused
constants, a condition that fails in the standard finite signature of
arithmetic. In the absence of completeness, derivability and support come
apart in a principled way, and it is precisely this gap that the present
result inhabits. Full details and analysis are given in what follows.

The philosophical moral is this: G\"{o}del's theorem need not be read as
demonstrating that a theory of arithmetic falls short of some antecedently
given model. It can instead be understood as a manifestation of the gap between what can be established by
finite derivation and what is determined by the inferential commitments
constitutive of the theory itself. This reframing establishes a principled
separation between two notions of consequence associated with a single theory:
derivability, which operates syntactically within the proof system, and
semantic support, which is fixed by the inferential roles the theory's axioms
assign to its non-logical vocabulary.

\subsection*{Organization.}
In Section~2 we review the proof-theoretic semantic framework and recall Sandqvist's notion of support, giving the compositional clauses for the logical constants and stating the relevant soundness and completeness result. In Section~3 we specify the assumptions on the arithmetical theory $\pa$ with which we work, together with the properties required of the provability predicate. In Section~4 we show that the support relation validates the consistency statement for suitable theories $\pa$, and we prove the main separation result $\pa \nvdash \Con(\pa)$ but $\pa \supp \Con(\pa)$. In Section~5 we explain in detail why this does not conflict with G\"odel's Second Incompleteness Theorem and clarify the precise relationship between semantic support, derivability, and familiar reflection principles.

\section{Proof-theoretic Semantics} \label{sec:pts}

The guiding idea of proof-theoretic semantics is that the meanings of expressions, and in particular of the logical constants, are to be explained in terms of their roles in inference rather than in terms of truth in independently given structures; see, for example, Dummett~\cite{dummett1993theoryI,dummett1991logicalbasis}, Prawitz~\cite{prawitz2006natural,prawitz1973towards}, and Schroeder-Heister~\cite{Schroeder-Heister_2008,Schroeder-Heister_2024}. This idea can be motivated by familiar cases of concept acquisition in natural language. Someone who does not yet know what a \emph{vixen} is may nevertheless come to understand the term by being told, for instance, that an individual is a vixen if and only if she is both a fox and female. More precisely, mastery of the concept is exhibited by grasping the inferential connections that license one to pass from claims about something's being a fox and being female to the claim that it is a vixen, and conversely:
\[
\infer{\mathrm{Vixen}(t)}{\mathrm{Fox}(t) & \mathrm{Female}(t)}
\qquad
\infer{\mathrm{Fox}(t)}{\mathrm{Vixen}(t)}
\qquad
\infer{\mathrm{Female}(t)}{\mathrm{Vixen}(t)}
\]

What is learned in such a case is not an independent condition of satisfaction, but a pattern of correct inferential use. In this sense, meaning is tied to patterns of inference rather than to a prior grasp of a domain of objects.
The proof-theoretic semantic programme applies this idea to logical vocabulary. For example, the meaning of conjunction ($\land$) is given not by a truth-table but by the inferential principles governing its use:
\[
\infer{\varphi \land \psi}{\varphi & \psi}
\qquad
\infer{\varphi}{\varphi \land \psi}
\qquad
\infer{\psi}{\varphi \land \psi}
\]

From this perspective, one is led  to a semantic notion of consequence formulated in proof-theoretic terms. In what follows, we recall the framework introduced by Sandqvist~\cite{Sandqvist2005inferentialist,Sandqvist_2009}, and fix notation and basic results that will be used throughout the paper.

\subsection{Bases and atomic derivability}

In the model-theoretic setting, one typically begins with a mathematical structure $\mathfrak{M}$ intended to represent a mind-independent domain of objects, and defines semantic notions by reference to what is true in that structure. In the present proof-theoretic setting, by contrast, we begin not with a domain but with a body of inferential commitments, understood as constitutive of the agent's use of the non-logical vocabulary. 

This body of commitments is called a \emph{base} $\base{B}$. In keeping with the constructive and syntactic character of the approach, it is represented by a set of inference rules over \emph{atomic} formulas, that is, formulas of the form $P(t_1,\ldots,t_n)$ where $P$ is a predicate symbol and the $t_i$ are terms. In particular, no logical constants occur at this stage: $\bot$ is not treated as an atomic formula, and neither $\to$, $\land$, nor $\forall$ is presupposed. The meanings of the logical constants will instead be fixed later by the compositional semantic clauses that govern their inferential roles.

\begin{definition}[Atomic rule]
An \emph{atomic rule} is an inference figure of the form
\[
\infer{C}{P_1 \quad \ldots \quad P_n}
\]
where $C, P_1, \ldots, P_n$ are closed atomic formulas.
\end{definition}

\begin{definition}[Base]
A \emph{base} $\base{B}$ is a set of atomic rules.
\end{definition}

\begin{example}\label{ex:socrates}
Let $H(t)$ mean ``$t$ is human'', $M(t)$ mean ``$t$ is mortal'', and let $s$ denote Socrates. Suppose an agent is committed to the claims ``Socrates is human'' and ``All humans are mortal''. This is represented by the rules
\[
\infer{H(s)}{} \qquad\text{and}\qquad \infer{M(t)}{H(t)}.
\]
\end{example}

Given a base $\base{B}$ encoding an agent's atomic inferential commitments, there is an induced notion of what the agent is \emph{committed to} in virtue of those commitments. Intuitively, a claim counts as supported at the atomic level if it can be obtained by finitely and iteratively applying the rules in the base. This leads to a natural notion of derivability from a base, defined by closing the atomic rules under their own application.

\begin{definition}[Derivation in a base]\label{def:der-base}
Let $\base{B}$ be a base. The set of $\base{B}$-derivations is the smallest set of trees defined as follows:
\begin{itemize}
  \item[\textsc{Ref}] If $\infer{C}{}$ is in $\base{B}$, then the one-node tree with root $C$ is a $\base{B}$-derivation.
  \item[\textsc{App}] If $\infer{C}{P_1 \ \ldots \ P_n}$ is in $\base{B}$, and $\mathcal{D}_1, \ldots, \mathcal{D}_n$ are $\base{B}$-derivations of $P_1, \ldots, P_n$, then the tree with root $C$ and subtrees $\mathcal{D}_1, \ldots, \mathcal{D}_n$ is a $\base{B}$-derivation.
\end{itemize}
We write $\proves_{\base{B}} C$ if there exists a $\base{B}$-derivation with root $C$.
\end{definition}

\begin{example}[Example~\ref{ex:socrates} cont'd]
From the base in Example~\ref{ex:socrates} we obtain the derivation
\[
\infer{M(s)}{H(s)}
\]
showing that $\proves_{\base{B}} M(s)$.
\end{example}

This notion of derivability $\vdash_{\base{B}}$ operates entirely at the atomic level: it concerns only atomic formulas and the inferential commitments encoded in the base. One may think of a base $\base{B}$ as a very simple axiom system, consisting of rules of the form $P_1, \ldots, P_n \to C$, closed under a single mode of composition, namely \emph{modus ponens}. At this stage, no logical constants are involved and no structure is assumed beyond what is encoded in the atomic rules themselves. The treatment of complex formulas and logical vocabulary will be introduced next, by extending this atomic notion of derivability to a fully compositional proof-theoretic semantics.

\subsection{The support relation}

We now define the central semantic notion of the framework: when a formula is \emph{supported} by a base. This relation plays the role, in the present proof-theoretic setting, that the satisfaction relation plays in model-theoretic semantics.

We write $\setVar, \setTerm, \setAtoms,$ and $\setFormulas$ for the sets of \emph{variables}, \emph{terms}, \emph{atomic formulas}, and \emph{formulas}, respectively. We use $\setClosedTerm$, $\setClosedAtom$, and $\setClosedFormulas$ for the corresponding sets of closed expressions. The logical constants $\to$, $\land$, $\forall$, and $\bot$ are primitive; other connectives and quantifiers are introduced as standard abbreviations: 
\[
\neg \varphi := \varphi \to \bot, \qquad \varphi \lor \psi := \neg(\neg \varphi \land \neg \psi), \qquad \text{and} \qquad \exists x\,\varphi := \neg \forall x\,\neg \varphi
\]

\begin{definition}[Support]\label{def:supp}
The support relation is defined inductively by the clauses in Figure~\ref{fig:support}, where $\base{B}$ and $\base{C}$ are bases, all formulas are assumed to be closed, and $\Delta$ ranges over non-empty sets of closed formulas.
\end{definition}

The general motivation and detailed justification for these clauses will not be repeated here; we defer to Sandqvist~\cite{Sandqvist2005inferentialist,Sandqvist_2009} for details. We note only a few orienting remarks. The clause for $\bot$ goes back to Dummett~\cite{dummett1991logicalbasis} and expresses the idea that a state of commitments is inconsistent precisely when it supports everything. 

The clause (Inf) has the familiar persistence or monotonicity form characteristic of Kripke-style semantics for intuitionistic logic: support is required to be stable under all admissible extensions of the underlying base. This is entirely natural given the proof-theoretic and constructive motivations of the framework. Nevertheless, as the soundness and completeness theorem recalled below shows, the resulting notion of support captures classical logic.

\begin{figure}
\hrule
\begin{align*}
&\supp_{\base{B}} A &&\text{iff } \qquad \supp_{\base{B}} A \tag{At}\\
&\supp_{\base{B}} \varphi \land \psi &&\text{iff } \qquad \supp_{\base{B}} \varphi \text{ and } \supp_{\base{B}} \psi \tag{$\land$}\\
&\supp_{\base{B}} \varphi \to \psi &&\text{iff } \qquad \varphi \supp_{\base{B}} \psi \tag{$\to$}\\
&\supp_{\base{B}} \forall x\varphi &&\text{iff } \qquad \supp_{\base{B}} \varphi[x \mapsto t] \text{ for all closed terms } t \tag{$\forall$}\\
&\supp_{\base{B}} \bot &&\text{iff } \qquad \supp_{\base{B}} A \text{ for every closed atom } A \tag{$\bot$}\\
\Delta &\supp_{\base{B}} \varphi &&\text{iff } \qquad \text{for all } C \supseteq B, \text{ if } \supp_{\base{C}} \psi \text{ for all } \psi \in \Delta, \text{ then } \supp_{\base{C}} \varphi \tag{Inf}
\end{align*}
\hrule
\caption{Support in a Base}
\label{fig:support}
\end{figure}

We now want to turn to defining \textit{validity} ($\supp$). 
As bases model the inferential commitments of an 
idealised rational agents, there are at least two natural candidates. One may quantify over arbitrary 
collections of inferential commitments, assigning no base a privileged 
status:
\[
    \Gamma \supp \varphi \qquad \mbox{iff} \qquad \Gamma \supp_{\base{B}} \varphi \quad \text{for any base } \base{B};
\]
--- this recalls Tarski's definition of model-theoretic consequence. Or one may simply require that $\varphi$ be supported in the absence of 
any antecedent commitments:
\[
   \Gamma \supp \varphi \qquad \mbox{iff} \qquad \Gamma \supp_{\emptyset} \varphi.
\]
These two formulations coincide as shown by Sandqvist~\cite{Sandqvist_2009}. We choose the latter one because it is simpler to state.

\begin{definition}[Validity]
For an arbitrary set $\Gamma \subseteq \setClosedFormulas$,
\[
    \Gamma \supp \varphi 
    \quad \text{iff} \Gamma \supp_{\emptyset} \varphi.
\]
\end{definition}

To recover classical logic, we require a restriction on $\Gamma$ --- there must be an infinite reserve of atomic formulae. Mathematically, this caused by the non-compactness of the semantics. Suppose, for example, that $\Omega$ contained all atoms; it is easy to see that 
\[
\Omega \supp \bot
\]
To block such pathological cases, we put some restrictions on the syntax.

Sandqvist~\cite{Sandqvist2005inferentialist,Sandqvist_2009} and Gheorghiu~\cite{gheorghiu2025fol} resolved this problem by working only with finite contexts. This essentially establishes only weak completeness:
\[
     \supp \varphi 
    \quad \text{iff} \quad \vdash_\Gamma \varphi
\]

However, the technique employed by Gheorghiu~\cite{gheorghiu2025fol} can be generalized. The finiteness condition is required only to guarantee that there are enough constants in the language. Making this the condition explicit and the finiteness condition itself is no longer required. 

\begin{theorem}\label{thm:snc}
Assuming a language with sufficiently many constants,
\[
\Gamma \supp \varphi \quad \text{iff} \quad  \vdash_\Gamma \varphi
\]
\end{theorem}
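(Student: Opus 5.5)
We prove the two directions separately: soundness by induction on derivations, completeness by a simulation argument in the style of Sandqvist and Gheorghiu~\cite{gheorghiu2025fol}. Throughout, $\vdash_\Gamma$ denotes derivability in a standard classical natural deduction system for $\to,\land,\forall,\bot$ (with classical reductio), with the formulas of $\Gamma$ as open assumptions. We read ``sufficiently many constants'' as the requirement that, for the given $\Gamma$ and $\varphi$, infinitely many closed terms do not occur in $\Gamma\cup\{\varphi\}$.

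\emph{Soundness} ($\vdash_\Gamma \varphi \Rightarrow \Gamma \supp \varphi$). We induct on the structure of a derivation of $\varphi$ from $\Gamma$. We prove the stronger claim that $\Gamma' \supp_{\base{B}} \psi$ for every base $\base{B}$, every subderivation with conclusion $\psi$, and every finite set $\Gamma'$ of its undischarged assumptions.

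The rules for $\land$ follow directly from clause ($\land$), and modus ponens and $\to$-introduction follow from ($\to$) and (Inf). For $\bot$-elimination, one first shows by induction on formulas that $\supp_{\base{C}}\bot$ implies $\supp_{\base{C}}\chi$ for every $\chi$. Classical reductio needs the lemma that $\supp_{\base{C}} \neg\neg\chi$ implies $\supp_{\base{C}} \chi$. This is proved by induction on $\chi$; the atomic case is handled, as in Sandqvist, by extending $\base{C}$ with the rule $\infer{B}{A}$ for an arbitrary atom $B$.

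$\forall$-elimination is immediate from ($\forall$). For $\forall$-introduction, the eigenvariable is instantiated by every closed term. This is fine because support is substitutional: the induction hypothesis is stated for all closed instances of open subderivations.

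Since support at $\emptyset$ follows from support at all bases, this direction needs no assumption on constants.

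\emph{Completeness} ($\Gamma \supp \varphi \Rightarrow \vdash_\Gamma \varphi$). We adapt Gheorghiu's technique. Fix $\Gamma,\varphi$. Assign to every subformula $\chi$ of $\Gamma\cup\{\varphi\}$ and every substitution instance of it by closed terms a fresh atom $\chi^\flat$. For a first-order language we take these atoms to be $p_\chi(\vec c)$, with $p_\chi$ a fresh predicate and $\vec c$ the closed terms filling the free variables. Then build a \emph{simulation base} $\base{N}$ containing atomic rules that mimic the introduction and elimination rules on flattened atoms.
\begin{itemize}
\item The $\to$ and $\forall$ rules, which involve discharge and generality, must be rendered as ordinary atomic rules. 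In a first-order base this is done by instantiating with the reserve of unused constants: the rule for $(\forall x\chi)^\flat$ takes as premise $(\chi[x\mapsto c])^\flat$ for a fresh $c$ not occurring in $\Gamma\cup\{\varphi\}$. This is exactly where sufficiently many constants is needed, since each universal subformula, possibly nested, must have its own eigenconstant.
\item Classical reductio is encoded by a rule deriving $\chi^\flat$ from $\bot^\flat$-type atoms. Following Sandqvist, it is safer to use the atom-quantification clause for $\bot$.
\end{itemize}

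We then prove two lemmas by induction on $\chi$, for every $\base{C}\supseteq\base{N}$:
\begin{enumerate}
\item $\supp_{\base{C}}\chi$ iff $\supp_{\base{C}}\chi^\flat$;
\item if $\vdash_{\base{N}}\varphi^\flat$ from the atoms $\Gamma^\flat$ (with $\Gamma^\flat$ treated as added axioms), then $\vdash_\Gamma\varphi$, obtained by reading each $\base{N}$-derivation back as a natural deduction derivation and replacing eigenconstants by variables.
\end{enumerate}
From $\Gamma\supp_\emptyset\varphi$, monotonicity gives $\Gamma\supp_{\base{N}\cup\Gamma^\flat}\varphi$. Lemma 1 then yields an atomic derivation of $\varphi^\flat$, and Lemma 2 yields $\vdash_\Gamma\varphi$.

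The \textbf{main obstacle} is the $\forall$ case of Lemma 1 when $\Gamma$ is infinite. In that case the finiteness argument fails, and we must show that one fresh constant suffices to witness support at all closed terms. The plan is to prove a renaming lemma: support at $\base{C}$ is invariant under permutations of constants not occurring in $\base{N}$ or in $\Gamma\cup\{\varphi\}$. Given this, support of $\chi[x\mapsto c]$ for the fresh $c$ transfers to every term, and conversely, so the infinite reserve, rather than finiteness of $\Gamma$, carries the argument.
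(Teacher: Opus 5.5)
Your soundness half is fine, and it is essentially all the paper itself establishes for this theorem (Proposition~\ref{prop:local-soundness}). One small fix: in the atomic case of double-negation elimination, add $\infer{B}{A}$ for \emph{every} atom $B$ at once, otherwise the extended base does not support $\neg A$.

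The completeness half has a genuine gap, because the simulation-base method is not available for the bases of this paper. Atomic rules here are first-level, with no discharge. So no rule of $\mathfrak{N}$ can turn a conditional commitment into derivability of an atom, and your Lemma~1 fails at $\to$ however $\mathfrak{N}$ is built. To see this, take $\Gamma=\emptyset$ and $\varphi=P\to Q$ with $P,Q$ distinct atoms, and let $\mathfrak{C}=\mathfrak{N}\cup\{\infer{Q}{P}\}$.
\begin{itemize}
\item Then $\Vdash_{\mathfrak{C}} P\to Q$.
\item $\mathfrak{N}$ must not derive $P$, or the read-back would give $\vdash P$. So the new rule never fires, and $\mathfrak{C}$ derives exactly the atoms $\mathfrak{N}$ derives.
\item Lemma~1 would therefore give $\vdash_{\mathfrak{N}}(P\to Q)^\flat$, and Lemma~2 would then give $\vdash P\to Q$.
\end{itemize}
Sandqvist-style flattening relies on higher-level rules of the form ``from a derivation of $q^\flat$ from $p^\flat$, infer $(p\to q)^\flat$''.

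Independently, the hypothesis supplies constants, not predicate symbols. Your $p_\chi$ enlarge the language, and since clause ($\bot$) quantifies over all closed atoms, support is language-relative. So the step ``$\Gamma\Vdash_\emptyset\varphi$, hence $\Gamma\Vdash_{\mathfrak{N}\cup\Gamma^\flat}\varphi$ by monotonicity'' is not legitimate. Your reading of the hypothesis as ``infinitely many unused closed terms'' is also too weak: the terms $S^n(c)$ for a single constant $c$ already qualify. Witnesses and eigenvariables need infinitely many unused \emph{constants}.

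The language-relativity is not cosmetic: it shows the statement needs more than a reserve of constants. Suppose $=$ is the only predicate, as in the paper's arithmetic signatures. A base supporting $\forall x\forall y\,(x=y)$ then derives every closed atom and so supports $\bot$. Hence $\Vdash\neg\forall x\forall y\,(x=y)$, although this sentence is not derivable (it fails in a one-element structure), however many constants are available. Your argument hides this failure exactly in the passage to the enlarged language.

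What does work with first-level bases is a canonical-base argument.
\begin{itemize}
\item Suppose $\Gamma\nvdash\varphi$. Use the unused constants as Henkin witnesses to extend $\Gamma\cup\{\neg\varphi\}$ to a maximal consistent $\Sigma$ in the \emph{given} language. An infinite $\Gamma$ is harmless, since each stage adds only finitely many sentences.
\item This, not a renaming lemma, is where the reserve of constants is used. Your renaming lemma would not apply anyway: the eigenconstant occurs in $\mathfrak{N}$, and permutations of constants do not reach compound terms.
\item Let $\mathfrak{B}$ contain $\infer{A}{}$ for each atom $A\in\Sigma$, and $\infer{B}{A}$ for every atom $B$ whenever $A\notin\Sigma$.
\item Show by induction on $\psi$ that $\psi\in\Sigma$ gives $\Vdash_{\mathfrak{B}}\psi$, and $\psi\notin\Sigma$ gives $\Vdash_{\mathfrak{B}}\neg\psi$.
\end{itemize}
This yields a base supporting $\Gamma$ but not $\varphi$ \emph{provided} $\mathfrak{B}$ is consistent, i.e.\ some atom lies outside $\Sigma$. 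That requires an extra hypothesis: some atom must not be derivable from $\Gamma\cup\{\neg\varphi\}$. It holds for $\mathsf{Q}$ and $\mathsf{PA}$, which refute $0=S0$. The paper only gestures at generalizing Gheorghiu's technique and does not address this point.
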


The soundness direction is easy to see by the standard induction argument. Indeed, it holds locally:

\begin{proposition} \label{prop:local-soundness}
Let $\supp_{\base{B}} \gamma$ for $\gamma \in \Gamma$. If $\vdash_{\Gamma} \phi$, then $\supp_{\base{B}} \phi$
\end{proposition}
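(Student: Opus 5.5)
We argue by induction on the height of a natural-deduction derivation witnessing $\vdash_\Gamma \phi$. We take the proof system to be classical natural deduction for $\to,\land,\forall,\bot$ (introduction and elimination rules, with $\bot$-elimination replaced by classical reductio). To make the induction go through, we strengthen the statement to open assumptions. For every derivation of $\phi$ from undischarged hypotheses $\Delta \cup \Gamma$, and every base $\base{C} \supseteq \base{B}$, if $\supp_{\base{C}} \delta$ for all $\delta \in \Delta$, then $\supp_{\base{C}} \phi$. The hypothesis $\supp_{\base{B}} \gamma$ transfers to every $\base{C} \supseteq \base{B}$ by the standard monotonicity lemma, which is itself proved by a routine induction on formulas using the persistence built into (Inf). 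Free variables are handled by proving the claim for all closed instances of the derivation, i.e.\ for every substitution of closed terms for the free variables.

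We then check each rule.

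\begin{itemize}
\item Hypotheses hold by assumption.
\item The $\land$ rules are immediate from clause ($\land$).
\item For $\to$-introduction, we need $\varphi \supp_{\base{C}} \psi$. Given $\base{D} \supseteq \base{C}$ with $\supp_{\base{D}} \varphi$, the induction hypothesis applied at $\base{D}$ yields $\supp_{\base{D}} \psi$.
\item $\to$-elimination is (Inf) instantiated at $\base{C}$ itself.
\item For $\forall$-introduction, the eigenvariable condition lets us substitute an arbitrary closed term $t$ for the eigenvariable without touching $\Delta \cup \Gamma$. The induction hypothesis then gives every closed instance $\varphi[x\mapsto t]$, as clause ($\forall$) requires.
\item $\forall$-elimination is immediate from clause ($\forall$).
\item Ex falso follows once we show that $\supp_{\base{C}} \bot$ implies $\supp_{\base{C}} \chi$ for every closed $\chi$. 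This is a lemma proved by induction on $\chi$, using that support of all atoms persists to extensions.
\end{itemize}

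The main obstacle is the classical rule (reductio, or equivalently double-negation elimination). Here we must show $\neg\neg\varphi \supp_{\base{C}} \varphi$, i.e.\ that support is stable under double negation. We would prove this by induction on $\varphi$.

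The atomic case is the crux. Suppose $\neg\neg A$ is supported at $\base{C}$ but $A$ is not derivable in $\base{C}$. Consider the extension $\base{C}'$ obtained by adding the rules ``from $A$ infer $B$'' for every closed atom $B$. Then $\neg A$ holds at $\base{C}'$: any extension supporting $A$ derives every atom and so supports $\bot$. Hence $\bot$ is supported at $\base{C}'$, so every atom is derivable in $\base{C}'$. From this we aim to conclude that $\base{C}$ derives $A$. The argument is that any $\base{C}'$-derivation of an atom not already $\base{C}$-derivable must pass through $A$, since the new rules fire only on $A$. Because atoms not derivable in $\base{C}$ remain available (no atom other than $A$ appears in the new rules' premises), $A$ itself must be $\base{C}$-derivable. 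This is the Sandqvist-style argument, and it is delicate precisely because the clause for $\bot$ quantifies over all closed atoms. We therefore follow Sandqvist's treatment closely here.

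The compound cases of the double-negation lemma then reduce to the atomic case and to the preceding lemmas in the usual way. For example, for $\to$ we show $\neg\neg(\varphi\to\psi)$ and $\varphi$ give $\neg\neg\psi$ at any extension, and then apply the induction hypothesis to $\psi$.
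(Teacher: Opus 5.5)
Your proposal is correct and is the standard induction on derivations that the paper invokes without giving details. It uses monotonicity, the ex falso lemma, and Sandqvist's double-negation lemma for the classical rule, with the atomic case handled by the base $\base{C}'$ in which $A$ is explosive. The one step to state more carefully is the end of that atomic case: since $\vdash_{\base{C}'} A$ but $\nvdash_{\base{C}} A$, some new rule $A/B$ must be used, and above an uppermost such application the subderivation of its premise $A$ uses only rules of $\base{C}$, which gives the contradiction directly.
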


The completeness direction is where the condition  `sufficiently many constants' is required. What this means is that the language contain infinitely many atomic expressions that are not used in the theory $\Gamma$. This reflects the ordinary mathematical practice of introduces fresh symbols or parameters as needed. Formally, this amounts to distinguishing between an \emph{ambient-language} signature $\sigma$, which contains an unlimited supply of constants, and the more austere \emph{object-language} sub-signature $\tau$ in which $\Gamma$ is formulated and its proofs are carried out.

For example, one may take the ambient language to have signature $\sigma: = \langle (c_i)_{i\in \mathbb{N}}, S, +, \times \rangle$ --- that is, one has countably many constants $c_i$.  Meanwhile, a theory of arithmetic $\pa$ in which one is interested is specified in the sub-signature $\tau = \langle c_0, S, +, \times \rangle$ with only one constant $c_0=0$. The additional constants serve merely as a stock of fresh names, playing the role of parameters or eigenvariables in semantic and metatheoretic arguments. 

In such an enriched setting, the soundness and completeness theorem applies without difficulty, and in particular one recovers the equivalence
\[
\mathsf{PA} \supp \varphi \quad \text{iff} \quad \vdash_{\mathsf{PA}} \varphi.
\]
when $\mathsf{PA}$ is Peano Arithmetic. 
Nothing essential about ordinary arithmetic is changed by this manoeuvre. Yet is certainly not what is traditionally done in setting up a formal theory of arithmetic. 

If one insists on working strictly within the original, austere signature of arithmetic, and refuses to make use of any such background reserve of symbols, the situation becomes more delicate and, from the present point of view, more interesting. The failure of the infinite reserve condition in this setting opens up the possibility that derivability and semantic support may come apart in a principled way. It is in this gap that G\"odel's incompleteness theorems lie.

\section{Theories of Arithmetic} \label{sec:theories}

In this section we fix the arithmetical background and establish some standard proof-theoretic infrastructure that will be used remainder of the paper. We work with a fixed theory $\pa$ in a first-order signature  $\langle 0, S, +, \times \rangle$, assumed to be strong enough to support the usual arithmetization of syntax and proofs. Nothing here goes beyond completely orthodox accounts; the purpose is simply to make precise, and to fix notation for, the machinery that will later be invoked in the semantic arguments.

We begin with some elementary notational conventions. We write $t<s$ to abbreviate $\exists x(t+x = s)$. Accordingly, we write $\forall x<t\,\varphi$ to abbreviate $\forall x\,(\exists y(x+y=t) \to \varphi)$. We also write $\bar{n}$ for the $n$th numeral --- that is, 
\[
\begin{cases}
    \bar{0}=0 \\
    \bar{n} = S(\overline{n-1}).
\end{cases}
\]

We now turn to the formalization of syntax and proofs inside arithmetic. This is the sense in which $\pa$ is assumed to be \emph{strong enough} to be of interest: for the incompleteness arguments and the later semantic constructions, we require a fully explicit internal notion of proof.

In particular, $\pa$ must be strong enough to define a provability predicate
\[
\Prf_{\pa}(p,x),
\]
intended to express that the natural number $p$ codes a $\pa$--derivation whose final formula has G\"odel number $x$. To this end, we assume there is a G\"odel numbering
\[
\ulcorner\cdot\urcorner : \{\text{finite strings of symbols}\} \to \mathbb{N}
\]
satisfying the usual effectiveness requirements. 

As the setup of provability predicates is doubtless familiar, we shall be relatively terse and merely list the properties we expect. For example, we assume that the encoding function is injective in the following sense:
\begin{description}
\item[Comp] \namedlabel{desc:completeness-encoding}{\textbf{Comp}} (Completeness of G\"odel numbering.)
For all formulas $\varphi,\psi$,
\[
\proves_{\pa} \ulcorner \varphi \urcorner = \ulcorner \psi \urcorner 
\qquad \text{iff} \qquad  
\varphi = \psi.
\]
\end{description} \medskip

We take it that we can define the following with $\pa$ by bounded ($\Delta_0$-)formulas:
\begin{itemize}
\item $\Form(x)$: $x$ is the G\"odel number of a well--formed formula;
\item $\Seq(p,n)$: $p$ codes a finite sequence of length $n$;
\item $\Elt(p,i,y)$: $y$ is the $i$--th element of the sequence coded by $p$;
\item $\Ax_{\pa}(x)$: $x$ is the G\"odel number of an axiom of $\pa$;
\item $\MP(a,b,c)$: $c$ follows from $\pa$ and $b$ by modus ponens;
\item $\Gen(a,c)$: $c$ is obtained from $\pa$ by universal generalization.
\end{itemize}

The provability predicate is built-up from these function. To this end, first define the predicate expressing that a given line in a coded derivation is locally correct:
\[
\begin{aligned}
\Line(p,i)\;:=\;&
\exists y\,\Big(
\Elt(p,i,y)\ \wedge\ \Form(y)\ \wedge\\
&\Big(
\Ax_{\pa}(y)\ \lor\\
&\qquad
\exists j,k<i\;\exists y_1,y_2\,
\big(
\Elt(p,j,y_1)\ \wedge\
\Elt(p,k,y_2)\ \wedge\
\MP(y_1,y_2,y)
\big)\ \lor\\
&\qquad
\exists j<i\;\exists y_1\,
\big(
\Elt(p,j,y_1)\ \wedge\
\Gen(y_1,y)
\big)
\Big)
\Big).
\end{aligned}
\]
All quantifiers in $\Line(p,i)$ are bounded, hence $\Line(p,i)$ is a $\Delta_0$-formula.

Secondly, we define the proof predicate itself. We set:
\[
\begin{aligned}
\Prf_{\pa}(p,x)\;:=\;&
\Form(x)\ \wedge \; \exists n \;\Big(
\Seq(p,n)\ \wedge\
n>0\ \wedge\\
&\qquad
\Elt(p,n-1,x)\ \wedge \;
\forall i<n\;\Line(p,i)
\Big).
\end{aligned}
\]
All quantifiers are now bounded, so $\Prf_{\pa}(p,x)$ is a $\Delta_0$-formula. Observe $\Prf_{\pa}(p,x)$ expresses that $p$ codes a finite sequence of formulas whose final entry has G\"odel number $x$, and every line of the sequence is locally justified according to the axioms and rules of $\pa$.

Finally, we define the associated provability predicate
\[
\Prov_{\pa}(x)\;:=\;\exists p\,\Prf_{\pa}(p,x),
\]
which is a $\Sigma_1$ formula.
\medskip

We assume that the chosen G\"odel coding satisfies the following standard correctness properties, all of which are primitive recursive and whose arithmetical counterparts are provable in $\pa$. They are stated explicitly only to fix notation and to make later internal reasoning completely transparent.

\begin{description}
\item[Seq] \namedlabel{desc:functionality}{\textbf{Seq}} (Functionality of sequence coding.) 
The predicate $\Elt(p,i,y)$ may be treated as a functional relation in the following sense:
\begin{itemize}
\item for fixed $p$ and $i$, there exists at most one $y$ such that $\Elt(p,i,y)$ holds;
\item if $\Seq(p,n)$ holds, then for each $i<n$ there exists exactly one $y$ such that $\Elt(p,i,y)$ holds.
\end{itemize}

\item[Rule] \namedlabel{desc:correctness}{\textbf{Rule}} (Correctness of syntactic predicates.) 
Externally, the relations $\Ax_{\pa}$, $\MP$, and $\Gen$ represent the intended axiom and inference rules of $\pa$. Internally, $\pa$ proves the corresponding representability conditions, in particular:
\begin{itemize}
\item $\Ax_{\pa}(\overline{n})$ holds iff $n$ is the G\"odel number of an axiom of $\pa$;
\item $\MP(\overline{a},\overline{b},\overline{c})$ holds iff $a=\ulcorner\varphi\urcorner$, $b=\ulcorner\varphi\to\psi\urcorner$, and $c=\ulcorner\psi\urcorner$ for some formulas $\varphi,\psi$;
\item $\Gen(\overline{a},\overline{c})$ holds iff $a=\ulcorner\varphi\urcorner$ and $c=\ulcorner\forall x\,\varphi\urcorner$ for some formula $\varphi$ and variable $x$.
\end{itemize}
We work with a proof system in which generalization has no side conditions, or else such conditions are built into $\Gen$.
\end{description}
\medskip

Finally, we record the basic closure properties of proofs under taking initial segments. This will be needed repeatedly when reasoning inside $\pa$ about its own derivations.

To reason internally about proofs, we require the ability to manipulate prefixes of coded derivations. Let $\Pre(p,k)$ be a primitive recursive function coding the initial segment of length $k$ of the sequence coded by $p$. We assume the following standard properties, all provable in $\pa$:

\begin{description}
\item[Len] \namedlabel{desc:prefix-length}{\textbf{Len}} (Length of prefixes.)
\[
\proves_{\pa} \Seq(\Pre(p,k),k).
\]

\item[Pref] \namedlabel{desc:prefix-closure}{\textbf{Pref}} (Prefix closure of proofs.)
\[
\proves_{\pa} 
\Prf_{\pa}(p,\ulcorner\varphi\urcorner) \ \wedge\ \Elt(p,n,\ulcorner\psi\urcorner)
\ \rightarrow\ 
\Prf_{\pa}(\Pre(p,n+1), \ulcorner\psi\urcorner).
\]
\end{description}
\medskip

These assumptions ensure that $\Prf_{\pa}(p,x)$ and $\Prov_{\pa}(x)$ behave, from the internal point of view of arithmetic, exactly as the intended notions of `$p$ is a code of a $\pa$-proof of a formula with code $x$' and `the formula with code $x$ is provable in $\pa$'. This infrastructure will be used in the next sections both to formulate the relevant consistency statements and to relate syntactic derivability to the proof-theoretic semantic notion of support.

\section{The Consistency Statement} \label{sec:main}

Having defined the provability predicate, we can now define the consistency statement:
\[
\Con(\pa) \;:=\; \neg \Prov_{\pa}(\ulcorner\bot\urcorner).
\]
It is the sentence expressing with $\pa$ that there is no code of a $\pa$--proof of $\bot$. In this section we show that,
\[
    \pa \supp \Con(\pa).
\]
It is important to stress at the outset that this is not a claim about 
derivability. G\"odel's second incompleteness theorem remains entirely 
intact: $\pa$ does not \emph{prove} its own consistency,
\[
\not \proves_{\pa} \Con(\pa)
\]
This is as we are exploiting the condition of completeness (Theorem~\ref{thm:snc}) for there to be a sufficient reserve of constants. 

Before the mathematical treatment below, we might offer some explanation of why this is as we would expect.  As mentioned in Section~\ref{sec:introduction}, Dummett claimed that G\"{o}del's incompleteness theorem(s) shows the concept of `ground for asserting
something about all natural numbers' to be \emph{indefinitely extensible}~\cite{dummett1963}. A concept is `indefinitely
extensible' when there is a principle such that, for any definite totality
of things falling under it, that principle generates a strictly more inclusive
definite totality of things falling under it. In this case, any definite characterisation of such grounds for an assertion of about the natural numbers can be
extended by the G\"{o}delian construction to a strictly more inclusive one.

The problem with Dummett's argument is that this extension was supposed to be
one we can always \emph{recognise as correct} --- which requires demonstrating
the truth of the G\"{o}del sentence, which in turn requires a consistency
proof. Wright's central objection turns on a dilemma concerning exactly that
consistency proof~\cite{wright1994}. If it is required to be \emph{suasive} --- genuinely
informative, providing positive grounds for confidence in the system's
soundness --- then no such proof is available in general, and Dummett's
argument fails to go through. If, on the other hand, only a non-suasive proof
is required (one that merely draws out what is already implicit in treating
the system's axioms as true), then the demonstrability of the G\"{o}del
sentence can do no work in establishing that arithmetical truth outruns any
systematic characterisation. Either way, the presupposition on which
Dummett's problem rests is not securely in place.

Dummett conceded the essential force of Wright's objection to the original
formulation~\cite{Dummett1994}. He clarified that the principle of extension associated with an
indefinitely extensible concept need not itself resist precise
characterisation --- on the contrary, it is precisely because the
G\"{o}delian extension procedure \emph{can} be given a general
characterisation that the meaning-is-use thesis is safeguarded. What cannot
be done is to capture the extension of the concept by any fixed recursive
axiomatisation. He also clarified the notion of a \emph{definite totality}: one over
which quantification always yields a statement determinately true or false,
and of which we have a clear grasp --- acknowledging that this standard admits
of degrees, and that whether \emph{natural number} counts as indefinitely
extensible depends on how rigorously one sets those standards, while insisting
that \emph{ordinal number} is indefinitely extensible on any non-trivial
setting, on pain of contradiction.

We pickup on Dummett's thought and attempt to give it mathematical formulation using the proof-theoretic semantics of classical logic. To this end we require some technical details. 

\subsection*{Definitionality of Arithmetic Theories}

On the inferentialist reading, the domain of numbers is not an
independently given totality, but is fixed by the signature and the
inferential rules of the theory itself. Once the meanings of $0$, $S$, $+$,
and $\times$ are determined by the rules of $\pa$, there is nothing further
for a `number' to be beyond what is already fixed by those rules. Importantly, there may well be facts that outrun our ability to \emph{derive} them within the
finitary proof system.

From this point of view, extending $\pa$ by the assertion that there exists a
number coding a $\pa$-proof of $\bot$ is not merely to add a new hypothesis
about an already fixed domain. It is, rather, to \emph{extend} the very
concept of `number' to one in which such a claim is true. However, that renders the concept \emph{ipso facto} incoherent for it claims of itself to be contradictory; therefore,
\[
\pa, \Con(\pa) \supp \bot
\]

This line of thought points to a more general phenomenon, namely a form of
\emph{reflection}:
\[
\pa, \Prov(\ulcorner \varphi \urcorner) \supp \varphi
\]
Let $\varphi$ be any sentence. If the provability predicate is correct, then whenever one extends $\pa$ by the
claim that there exists a number coding a $\pa$-proof of $\varphi$, it must be that $\varphi$ itself
obtains. Using the clause for $\to$, we identify this with the claim:
\[
    \pa \supp \Prov_{\pa}(\ulcorner \varphi \urcorner) \to \varphi.
\]
The consistency statement is simply the special case in which $\varphi$ is
$\bot$.

Here the connection to Dummett's notion of indefinite extensibility becomes
precise. The reflection principle does not merely instantiate that notion ---
it \emph{is} the general characterisation of the principle of extension that
Dummett insisted must be available if the meaning-is-use thesis is to be
safeguarded. Dummett clarified, against Wright, that the principle
of extension for an indefinitely extensible concept can itself be given a
precise characterisation~\cite{Dummett1994}; what resists capture is not the principle but the
totality it generates. 

The reflection schema is exactly such a principle. The
premise $\Prov_{\pa}(\ulcorner \varphi \urcorner)$ does not describe a number
already present in a fixed domain; asserting it is the act of extension.
Each application of the schema enlarges the totality of numbers, and the
G\"{o}delian iteration --- the fact that the procedure can always be applied
again to the resulting system --- is precisely what makes the concept of
number indefinitely extensible in Dummett's sense, and what ensures that no
fixed recursive axiomatisation can ever capture it. 

For this argument to go through mathematically, we use the fact that $\pa$ is strong enough to do basic arithmetic. A sufficient condition is that it decides atoms --- for any closed terms $t_1$ and $t_2$, either
\[
\pa \proves t_1 = t_2 \qquad \text{ or } \qquad  \pa \proves t_1 \neq t_2
\]
--- and can compute using primitive recursion.  This condition is met by both Robinson Arithmetic $\mathsf{Q}$ and Peano Arithmetic $\mathsf{PA}$ --- see Kaye~\cite{Kaye1991}.

\begin{definition}[Sufficiently Strong Arithmetic Theory]
An arithmetic theory $\pa$ is \emph{sufficiently strong} if it satisfies the following two conditions:
\begin{itemize}
    \item For any $m, n \in \mathbb{N}$: $\vdash_{\mathsf{T}} \bar{m} = \bar{n}$ iff $m=n$,
    \item For every $\Delta_0$-sentence $\phi$, either $\vdash_{\pa} \phi$ or $\vdash_{\pa} \neg\phi$.
\end{itemize}
\end{definition}

Importantly, if $\pa$ is sufficient and recursively axiomatizable --- that is, 
there is a primitive recursive function enumerating the axioms of $\pa$ --- then 
$\pa$ can define a provability predicate, as in Section~\ref{sec:theories}.

\subsection*{Maxiconsistent bases}

Intuitively, a base $\base{B}$ represents an information state of an 
idealised reasoner: it records which inferential commitments are 
currently in place, but need not in general be complete. When $\base{B}$ 
supports all axioms of $\pa$ we take this to mean that an agent has 
accepted the laws of arithmetic as constitutive of the subject matter. Nonetheless, an information state $\base{B}$ may be extended by adding 
further commitments, provided this does not lead to incoherence. Bases 
are, in general, partial and extensible. The limiting case is reached 
when no further extension is possible without collapse into triviality.

This motivates the notion of a \emph{maxiconsistent base}, due to 
Makinson~\cite{Makinson_2014}, who argues that maxiconsistent bases 
play a role in the present setting analogous to that played by models 
in classical model theory. 

\begin{definition}[Maxiconsistent Base]
A base $\base{B}$ is \emph{maxiconsistent} if it is consistent 
(i.e.,~$\not\supp_{\base{B}} \bot$) and maximal among consistent 
bases (i.e.,~for every $\base{C} \supseteq \base{B}$, either 
$\base{C} = \base{B}$ or $\base{C}$ is inconsistent).
\end{definition}

Makinson observes that they behave classically with respect to the defined 
connectives:

\begin{proposition}\label{prop:maxiconsistent-disjunction}
If $\mbase{B}$ is maxiconsistent, then:
\begin{itemize}
    \item $\supp_{\mbase{B}} \neg \varphi$ iff $\not\supp_{\mbase{B}} \varphi$;
    \item $\supp_{\mbase{B}} (\varphi \lor \psi)$ iff $\supp_{\mbase{B}} \varphi$
        or $\supp_{\mbase{B}} \psi$;
    \item $\supp_{\mbase{B}} \exists x\,\varphi$ iff $\supp_{\mbase{B}}
        \varphi[x \mapsto t]$ for some $t \in \setClosedTerm$.
\end{itemize}
\end{proposition}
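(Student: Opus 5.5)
The plan is to prove the three items in order, each building on the previous. The main tool will be a \emph{rigidity} property of maxiconsistent bases: if $\mbase{B}$ is maxiconsistent and $\base{C}\supseteq\mbase{B}$, then either $\base{C}=\mbase{B}$ or $\supp_{\base{C}}\bot$. Because every atom is then supported in $\base{C}$ by the clause ($\bot$), an induction on formulas shows that $\supp_{\base{C}}\chi$ holds for every $\chi$ whenever $\base{C}$ is inconsistent. Combined with (Inf), this gives a simple reduction: $\varphi\supp_{\mbase{B}}\psi$ holds iff either $\not\supp_{\mbase{B}}\varphi$ or $\supp_{\mbase{B}}\psi$. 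The reason is that the only extensions of $\mbase{B}$ are $\mbase{B}$ itself and inconsistent bases, and in the inconsistent ones everything is supported. So in maxiconsistent bases the clause ($\to$) collapses to material implication. Monotonicity of support under base extension (proved by the usual induction, as in Sandqvist) is needed here to conclude that inconsistency is inherited by extensions.

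For the first item, take $\neg\varphi=\varphi\to\bot$. By the reduction, $\supp_{\mbase{B}}\neg\varphi$ holds iff $\not\supp_{\mbase{B}}\varphi$ or $\supp_{\mbase{B}}\bot$. The second disjunct fails because $\mbase{B}$ is consistent, so $\supp_{\mbase{B}}\neg\varphi$ holds iff $\not\supp_{\mbase{B}}\varphi$, as claimed.

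The remaining items follow by unfolding the abbreviations and applying the first item repeatedly. For disjunction, $\supp_{\mbase{B}}\neg(\neg\varphi\land\neg\psi)$ holds iff $\not\supp_{\mbase{B}}\neg\varphi\land\neg\psi$. By the clause ($\land$), this holds iff $\not\supp_{\mbase{B}}\neg\varphi$ or $\not\supp_{\mbase{B}}\neg\psi$. Applying the first item again, this holds iff $\supp_{\mbase{B}}\varphi$ or $\supp_{\mbase{B}}\psi$. For the existential, $\supp_{\mbase{B}}\neg\forall x\,\neg\varphi$ holds iff $\not\supp_{\mbase{B}}\forall x\,\neg\varphi$. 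By the clause ($\forall$), this holds iff there is some closed $t$ with $\not\supp_{\mbase{B}}\neg\varphi[x\mapsto t]$, which by the first item holds iff $\supp_{\mbase{B}}\varphi[x\mapsto t]$ for that $t$. We must check that substitution commutes with the abbreviation $\neg$, but this is immediate.

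I expect the main obstacle to be the rigidity step, specifically the claim that inconsistency propagates to all formulas in an inconsistent base. The cases for $\land$ and $\forall$ are routine. The case for $\to$ requires that every extension of an inconsistent base is itself inconsistent, which is where monotonicity is used. The argument should also handle (Inf) with arbitrary nonempty $\Delta$ uniformly. The atomic clause (At) should be read as $\vdash_{\base{B}}$, and under that reading the ($\bot$) clause gives the atomic case directly.
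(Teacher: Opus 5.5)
Your proof is correct and takes essentially the paper's route. Maximality forces every proper extension of $\mbase{B}$ to be inconsistent, so the (Inf) clause for $\varphi\to\bot$ reduces to a condition on $\mbase{B}$ alone; disjunction and the existential then come from unfolding the abbreviations exactly as you do. The only difference is that you prove the general collapse of $\to$ to material implication, which needs the lemma that inconsistent bases support every formula (plus monotonicity). The paper uses only the case $\psi=\bot$, where an inconsistent extension satisfies the conditional trivially because it supports $\bot$ by definition. So the step you flag as the main obstacle is not actually needed.
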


\begin{proof}
We first establish the auxiliary equivalence
\begin{equation}\label{eq:ast}
    \supp_{\mbase{B}} \varphi
    \quad\mbox{iff}\quad
    \not\supp_{\mbase{B}} \neg\varphi.
    \tag{$*$}
\end{equation}
For the left-to-right direction, support of $\varphi$ together with
consistency of $\mbase{B}$ immediately precludes support of
$\neg\varphi$.  For the converse, if $\not\supp_{\mbase{B}} \neg\varphi$
then by the clause for implication, there exists a consistent base
$\base{C} \supseteq \mbase{B}$ with $\supp_{\base{C}} \varphi$;
maximality of $\mbase{B}$ then yields $\supp_{\mbase{B}} \varphi$.
We now verify each clause in turn.

\medskip\textbf{Negation.}\;
Since $\mbase{B}$ is consistent, $\supp_{\mbase{B}} \varphi$ and
$\supp_{\mbase{B}} \neg\varphi$ cannot hold simultaneously, so
$\supp_{\mbase{B}} \neg\varphi$ implies $\not\supp_{\mbase{B}} \varphi$.
The converse follows immediately from~\eqref{eq:ast}.

\medskip\textbf{Disjunction.}\;
By definition, $\supp_{\mbase{B}}(\varphi \lor \psi)$ abbreviates
$\supp_{\mbase{B}} \neg(\neg\varphi \land \neg\psi)$.  By the negation
clause just proved, this is equivalent to
$\not\supp_{\mbase{B}}(\neg\varphi \land \neg\psi)$.  The support clause
for conjunction then gives $\not\supp_{\mbase{B}} \neg\varphi$ or
$\not\supp_{\mbase{B}} \neg\psi$, which by~\eqref{eq:ast} is equivalent
to $\supp_{\mbase{B}} \varphi$ or $\supp_{\mbase{B}} \psi$.

\medskip\textbf{Existential quantification.}\;
By definition, $\supp_{\mbase{B}} \exists x\,\varphi$ abbreviates
$\supp_{\mbase{B}} \neg\forall x\,\neg\varphi$.  By the negation clause,
this is equivalent to $\not\supp_{\mbase{B}} \forall x\,\neg\varphi$.
The support clause for universal quantification then yields
$\not\supp_{\mbase{B}} \neg\varphi[x \mapsto t]$ for some
$t \in \setClosedTerm$, and~\eqref{eq:ast} gives
$\supp_{\mbase{B}} \varphi[x \mapsto t]$, as required.
\end{proof}

Makinson also observes that bases may be extended to a maxiconsistent base in a way 
that preserves what the original base does \emph{not} support.

\begin{proposition}\label{prop:maxiconsistent-extension}
If $\not\supp_{\base{B}} \varphi$, then 
there exists a maxiconsistent base $\mbase{B} \supseteq \base{B}$ 
such that $\not\supp_{\mbase{B}} \varphi$.
\end{proposition}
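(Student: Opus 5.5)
The plan is a Zorn's lemma argument on the poset of bases extending $\base{B}$ that still fail to support $\varphi$, followed by a check that a maximal element is maxiconsistent. Let $\mathcal{P}$ be the set of bases $\base{C} \supseteq \base{B}$ with $\not\supp_{\base{C}} \varphi$, ordered by inclusion. $\mathcal{P}$ is non-empty because $\base{B} \in \mathcal{P}$. Once we have a maximal element $\mbase{B} \in \mathcal{P}$, consistency is immediate: if $\supp_{\mbase{B}} \bot$, then $\mbase{B}$ supports every atom. A routine induction on $\varphi$, using the clauses of Figure~\ref{fig:support}, shows that a base supporting $\bot$ supports every formula. In particular it would support $\varphi$, a contradiction.

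For maximality among consistent bases, let $\base{C} \supsetneq \mbase{B}$. Maximality in $\mathcal{P}$ gives $\supp_{\base{C}} \varphi$, and we must show that $\base{C}$ is inconsistent. I would first reduce to the case where $\varphi$ is atomic, or more generally arrange matters so that supporting $\varphi$ forces triviality. A cleaner route is to change the poset. Take $\mathcal{P}$ to be the bases $\base{C} \supseteq \base{B}$ such that $\not\supp_{\base{C}} \varphi$ \emph{and} every consistent extension of $\base{C}$ that supports $\varphi$ ... this becomes circular. So I would instead use a two-stage argument.

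\textbf{Stage one.} Since $\not\supp_{\base{B}}\varphi$ and support is persistent, we may try to find a base $\base{C} \supseteq \base{B}$ supporting $\neg\varphi$. Unfolding $\neg\varphi = \varphi \to \bot$ via (Inf), this means every consistent extension of $\base{C}$ fails to support $\varphi$. I would take a maximal extension $\base{C}$ of $\base{B}$ among those not supporting $\varphi$ (Zorn), and argue that $\base{C}$ supports $\neg\varphi$. Any $\base{D} \supseteq \base{C}$ supporting $\varphi$ is either $\base{C}$ itself, which is impossible, or a proper extension. For a proper extension we need to show it supports $\bot$, which is exactly the issue above.

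\textbf{Stage two.} Given a base supporting $\neg\varphi$, extend it by Zorn to a maximal consistent base $\mbase{B}$. Persistence gives $\supp_{\mbase{B}} \neg\varphi$, and consistency then gives $\not\supp_{\mbase{B}}\varphi$.

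The main obstacle is in both stages: verifying the chain condition for Zorn. The union of a chain of consistent bases (respectively, of bases not supporting $\varphi$) must again be consistent (respectively, not support $\varphi$). This is where the non-compactness flagged after the validity definition bites. Atomic derivability $\proves_{\base{B}}$ is finitary, since each derivation uses finitely many rules, so for atoms the union behaves well. For $\bot$, however, support requires \emph{every} atom, and a chain could support each atom at different stages while none supports all of them. I would handle this by working at the atomic level: show that consistency of a base is equivalent to some atom being underivable, and use finitariness of $\proves$ together with a fixed witness atom. That is, fix an atom $A$ with $\not\proves_{\base{B}} A$, take bases not deriving $A$, whose chains are closed under unions, and show that maximal such bases are maxiconsistent. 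This identification of a witness atom, and relating it to $\varphi$ via the clauses, is the step I expect to require the most care.
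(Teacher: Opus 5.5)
\textbf{There is a genuine gap.} Your witness-atom idea is the right tool at the bottom level. For an atom $A$ with $\not\proves_{\base{B}} A$, a Zorn-maximal $\base{M}\supseteq\base{B}$ not deriving $A$ exists, because derivations are finite and so unions of chains behave. It is also maxiconsistent. Adding a rule with premise $A$ and any conclusion $D$ cannot make $A$ derivable, so $\base{M}$ already contains all such rules. Any proper extension must derive $A$, and hence derives every atom.

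But this only settles $\varphi$ atomic and $\varphi=\bot$. The step you defer, ``relating it to $\varphi$ via the clauses'', is the whole content of the proposition, and the proposal does not supply it. Your other routes do not help:
\begin{itemize}
\item Zorn over the bases not supporting $\varphi$ fails, because non-support is not preserved under unions of chains. This already happens for $\bot$: add axioms for infinitely many atoms one at a time.
\item Stage one is equivalent to the proposition itself once Stage two is granted, so it is circular, as you noticed.
\item The witness atom cannot simply be chosen at $\base{B}$. Take $\base{B}=\emptyset$ and $\varphi=p\to q$. The base of all rules with at least one premise is maximal among bases not deriving $q$. It does not derive $p$, and since all its proper extensions are inconsistent, it supports $p\to q$.
\end{itemize}

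\textbf{The missing idea} is induction on the logical complexity of $\varphi$. This is what the paper's cited argument amounts to (Makinson's Lemma~3.4 with an extra case for $\forall$), and only the atomic step needs Zorn.
\begin{itemize}
\item Atoms and $\bot$: handle them as above. For $\bot$, take $A$ to be any atom not derivable in $\base{B}$.
\item $\psi\land\chi$: apply the hypothesis to a conjunct not supported at $\base{B}$.
\item $\forall x\,\psi$: pick a closed term $t$ with $\not\supp_{\base{B}}\psi[x\mapsto t]$ and apply the hypothesis to that instance. This is why the induction must be on complexity rather than on subformulas.
\item $\psi\to\chi$: (Inf) gives $\base{C}\supseteq\base{B}$ with $\supp_{\base{C}}\psi$ and $\not\supp_{\base{C}}\chi$. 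The hypothesis for $\chi$ at $\base{C}$ gives a maxiconsistent $\mbase{B}\supseteq\base{C}$ with $\not\supp_{\mbase{B}}\chi$. Persistence of support gives $\supp_{\mbase{B}}\psi$, and then $\mbase{B}$ itself witnesses $\not\supp_{\mbase{B}}\psi\to\chi$.
\end{itemize}
In short, the witness atom must be chosen not at $\base{B}$ but at an extension of $\base{B}$ dictated by the shape of $\varphi$, and the induction is what produces that extension.
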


This follows by an argument analogous to Makinson~\cite{Makinson_2014} 
(Lemma~3.4), with an additional case in the induction to handle the 
universal quantifier. 


These results provide the technical background required for this paper. 

\subsection*{Reflection and a corollary}

We now show that an arithmetic $\pa$ supports
reflection, $\Prov_{\pa}(\varphi) \to \varphi$. The argument follows the structure of a
\emph{na\"{i}ve internal soundness proof}: one reasons by induction on
derivations, observing that axioms are supported outright and that the
rules of inference preserve support, so that every formula derivable in
$\pa$ is itself supported.  

This argument is na\"{i}ve as it cannot be carried out \emph{within} $\pa$ itself. It relies on smoothly moving between meta-level and object-level observations. One direction is enabled by local soundness (Proposition~\ref{prop:local-soundness}); for the other we use the following:

\begin{proposition}\label{prop:equationally-complete}
    Let $\pa$ be sufficiently strong $\base{B}$ be a consistent base that supports all
axioms of $\pa$. For any $\Delta_0$-formula $\delta$, if
\[
    \supp_{\base{B}} \delta,
\]
then
\[
    \proves_{\pa} \delta.
\] 
\end{proposition}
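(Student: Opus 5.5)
The argument goes by contraposition, using the decidability clause in the definition of a sufficiently strong theory together with local soundness (Proposition~\ref{prop:local-soundness}). Let $\delta$ be a $\Delta_0$-sentence with $\supp_{\base{B}} \delta$, and suppose towards a contradiction that $\not\proves_{\pa} \delta$. Since $\pa$ is sufficiently strong, every $\Delta_0$-sentence is decided by $\pa$, so $\proves_{\pa} \neg\delta$. Because $\base{B}$ supports every axiom of $\pa$, Proposition~\ref{prop:local-soundness} gives $\supp_{\base{B}} \neg\delta$, that is, $\supp_{\base{B}} \delta \to \bot$.

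The next step is to combine the two judgements. By the clause $(\to)$ we have $\delta \supp_{\base{B}} \bot$. Unfolding (Inf), with $\base{C} = \base{B}$ itself as the extension, the hypothesis $\supp_{\base{B}} \delta$ yields $\supp_{\base{B}} \bot$. This contradicts the assumption that $\base{B}$ is consistent, so $\proves_{\pa} \delta$.

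There is a scoping issue with ``$\Delta_0$-formula''. Support is defined only for closed formulas, so the statement should be read for $\Delta_0$-sentences. Open formulas can be handled either by taking universal closures or by noting that the decidability assumption concerns sentences. The reading I will adopt is that $\delta$ is closed. For an open $\delta$, the natural reading of $\supp_{\base{B}}\delta$ is support of its universal closure, and the universal closure of a $\Delta_0$-formula is not in general $\Delta_0$. In that case the argument would need an extra step: instantiate the closure at numerals using $(\forall)$, apply the closed case to each instance, and then pass back to a derivation of the open formula. That last step is not available in general, so I would restrict the statement to sentences.

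The only delicate point is checking that local soundness applies to $\neg\delta$, which is immediate since $\neg\delta$ is just $\delta\to\bot$ and Proposition~\ref{prop:local-soundness} holds for arbitrary formulas derivable from $\pa$. The real content lies in the decidability hypothesis, which the definition of sufficient strength grants outright. So I expect no serious obstacle. Consistency of $\base{B}$ is exactly what blocks the trivial case, and it is used only in the final step.
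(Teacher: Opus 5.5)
Your proposal is correct and follows the paper's own argument: contraposition via the decidability clause for $\Delta_0$-sentences, then local soundness (Proposition~\ref{prop:local-soundness}) to obtain $\supp_{\base{B}} \neg\delta$, then consistency of $\base{B}$ to exclude $\supp_{\base{B}}\delta$. Your explicit unfolding of $(\to)$ and (Inf) with $\base{C}=\base{B}$ spells out the paper's remark that $\supp_{\base{B}}\delta$ ``would render $\base{B}$ inconsistent''. Your restriction to sentences matches how the paper's proof, and its later applications, actually treat $\delta$.
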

\begin{proof}
We show the contrapositive: if $\not \vdash_{\pa} \delta$, then $\not \supp_{\base{B}} \delta$.

Suppose $\pa \not\proves \delta$. As $\pa$ is sufficiently strong and $\delta$ is a $\Delta_0$-sentences,
$\proves_{\pa} \not \delta$. By local soundness (Proposition~\ref{prop:local-soundness}), we
have $\supp_{\base{B}} \neg \delta$. But then $\supp_{\base{B}} \delta$
would render $\base{B}$ inconsistent, contrary to hypothesis. Hence,
$\not\supp_{\base{B}} \delta$. 
\end{proof}

This allows us to lift desirable properties of the provability relation to the semantic level where can use them. 

\begin{theorem}\label{thm:reflection}
    For any $\varphi$, $\pa \supp \Prov_{\pa}(\ulcorner \varphi \urcorner) \to \varphi$.
\end{theorem}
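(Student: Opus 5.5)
The plan is to unfold the validity claim via (Inf) and ($\to$): it suffices to show that for every base $\base{C}$ supporting all axioms of $\pa$ and $\Prov_{\pa}(\ulcorner\varphi\urcorner)$, we have $\supp_{\base{C}} \varphi$. If $\base{C}$ is inconsistent there is nothing to show, since then it supports every atom and hence, by a routine induction on formulas, every formula. So assume $\base{C}$ is consistent. Suppose, for contradiction, that $\not\supp_{\base{C}} \varphi$. By Proposition~\ref{prop:maxiconsistent-extension} there is a maxiconsistent $\mbase{B} \supseteq \base{C}$ with $\not\supp_{\mbase{B}} \varphi$. Since support persists upward, $\mbase{B}$ still supports the axioms of $\pa$ and $\Prov_{\pa}(\ulcorner\varphi\urcorner) = \exists p\,\Prf_{\pa}(p,\ulcorner\varphi\urcorner)$.

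The key step is to extract an actual proof. By Proposition~\ref{prop:maxiconsistent-disjunction}, the existential clause gives a closed term $t$ with $\supp_{\mbase{B}} \Prf_{\pa}(t,\ulcorner\varphi\urcorner)$. In the austere signature $\langle 0,S,+,\times\rangle$ every closed term $t$ is provably equal in $\pa$ to a numeral $\bar{n}$, since $\pa$ computes. By local soundness (Proposition~\ref{prop:local-soundness}) and the substitutivity of equality, $\supp_{\mbase{B}} \Prf_{\pa}(\bar{n},\ulcorner\varphi\urcorner)$. This is a $\Delta_0$-sentence, so Proposition~\ref{prop:equationally-complete} yields $\proves_{\pa} \Prf_{\pa}(\bar{n},\ulcorner\varphi\urcorner)$. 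The finite signature is exactly what makes this work: there are no fresh constants that could serve as nonstandard witnesses, so every witness is a numeral.

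Next, $\pa$ is consistent, since $\mbase{B}$ supports all its axioms and is consistent, by local soundness. With consistency, the provability of $\Prf_{\pa}(\bar{n},\ulcorner\varphi\urcorner)$ means $n$ really codes a $\pa$-derivation of $\varphi$. Otherwise $\pa$ would decide the false $\Delta_0$-sentence negatively, contradicting consistency. To see this concretely, use \ref{desc:functionality}, \ref{desc:correctness} and \ref{desc:completeness-encoding}. Read off the sequence element by element; each $\Line(\bar{n},\bar{i})$ is provable, hence by \ref{desc:correctness} each entry is an axiom or follows by MP or Gen from earlier entries. The final entry is $\ulcorner\varphi\urcorner$ by \ref{desc:completeness-encoding}. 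This gives $\proves_{\pa}\varphi$ at the meta-level, possibly after an induction on the length using \ref{desc:prefix-closure}. Then local soundness gives $\supp_{\mbase{B}} \varphi$, which is a contradiction. Hence $\supp_{\base{C}}\varphi$.

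I expect the main obstacle to be the decoding step: turning provability of a $\Delta_0$ proof-predicate instance into an external derivation. This requires that $\pa$'s $\Delta_0$ theorems be true, which follows from consistency plus $\Delta_0$-completeness, together with the representability assumptions. A secondary subtlety is the passage from an arbitrary closed term to a numeral inside support. I would handle it via local soundness applied to the $\pa$-provable equation $t=\bar{n}$ and the substitution instance $t=\bar{n}\wedge\Prf_{\pa}(t,x)\to\Prf_{\pa}(\bar{n},x)$.
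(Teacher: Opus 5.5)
Your argument is correct, but it is organised differently from the paper's proof. After extracting the witness term in the maxiconsistent base, the paper unfolds $\Prf_{\pa}$ and proves $\supp_{\base{B}}\varphi$ by induction on the length of the coded proof. Its induction hypothesis is phrased in terms of support. At each stage it uses the witness property of Proposition~\ref{prop:maxiconsistent-disjunction} to find how the last line is justified. It then transfers the resulting $\Delta_0$ facts into $\pa$ by Proposition~\ref{prop:equationally-complete} and decodes them with \textbf{Seq} and \textbf{Rule}. In the modus ponens and generalisation cases it uses \textbf{Pref}, \textbf{Len} and local soundness to get support for the prefix proofs, and only then applies the hypothesis and closes under the rule semantically.

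You instead apply Proposition~\ref{prop:equationally-complete} once, to the whole $\Delta_0$-sentence. You observe that $\pa$ is consistent because a consistent base supports it. You then conclude by $\Delta_0$-soundness that the witness codes a genuine derivation, so $\proves_{\pa}\varphi$, and finish with a single use of Proposition~\ref{prop:local-soundness}.

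Your route is shorter and has a real advantage. The paper's induction tacitly attributes support to intermediate lines of the derivation, which in a system with generalisation may be open formulas, although support is defined only for closed ones. You push all of that into the already available soundness of $\proves_{\pa}$ and only ever evaluate the closed sentence $\varphi$.

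The price is that you need $\Delta_0$-completeness (every true $\Delta_0$-sentence is provable) to obtain $\Delta_0$-soundness from consistency. This is not literally part of the paper's definition of a sufficiently strong theory, which only asks that $\pa$ decide each $\Delta_0$-sentence. It does hold for $\mathsf{Q}$ and $\mathsf{PA}$, and it is in effect presupposed by \textbf{Rule}. The paper's route extracts witnesses for the bounded quantifiers inside the maxiconsistent base, so it needs correctness only for the basic syntactic predicates. It also deliberately mirrors, at the semantic level, the internal soundness argument the paper wants to exhibit.

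Finally, your conversion of the witness $t$ to a numeral is harmless but unnecessary, since $\Prf_{\pa}(t,\ulcorner\varphi\urcorner)$ is already a $\Delta_0$-sentence.
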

\begin{proof}
    The statement is equivalent to $\pa, \Prov_{\pa}(\ulcorner \varphi \urcorner) \supp \varphi$. Assume, for contradiction, that this is not the case: $\pa, \Prov_{\pa}(\ulcorner \varphi \urcorner) \not \supp \varphi$. Then there is a base $\base{B}$ such that:
    \begin{itemize}
        \item $\supp_{\base{B}} \alpha \in \pa$,
        \item $\supp_{\base{B}}  \Prov_{\pa}(\ulcorner \varphi \urcorner)$, but
        \item $\not \supp_{\base{B}} \varphi$.
    \end{itemize}
    Without loss of generality, by Proposition~\ref{prop:maxiconsistent-extension}, $\base{B}$ is maxiconsistent. We show that from the first two we get $\supp_{\base{B}} \varphi$, contradicting the third. \medskip

By Proposition~\ref{prop:maxiconsistent-disjunction}, there is a term $p$ such that 
\[
\supp_{\base{B}}  \Prf_{\pa}(p,\ulcorner \varphi \urcorner)
\]
Unfolding the definition of $\Prf$,
this abbreviates
\[
\supp_{\base{B}}\exists n\Big(
\Seq(p,n)\wedge n>0 \wedge \Elt(p,n-1,\ulcorner\varphi\urcorner)\wedge \forall i<n\,\Line(p,i)
\Big).
\]
By Proposition~\ref{prop:maxiconsistent-disjunction} again and the clause for $\land$, there is some $n$ such that
$\base{B}$ supports each conjunct:
\begin{enumerate}
\item $\supp_{\base{B}}\Seq(p,\bar n)$,
\item $\supp_{\base{B}}\bar n>0$,
\item $\supp_{\base{B}}\Elt(p,\overline{n-1},\ulcorner\varphi\urcorner)$,
\item $\supp_{\base{B}}\forall i<\bar n\,\Line(p,i)$.
\end{enumerate}

We show $\supp_{\base{B}}\varphi$ by induction on $n$ --- that is, the length of the coded proof $p$.

\smallskip
\noindent\textbf{Base case:} $n=1$. From (3) we obtain that the $0$th line of $\varphi$ is $\ulcorner \varphi \urcorner$,
\[
\supp_{\base{B}} \Elt(p,0,\ulcorner\varphi\urcorner) \qquad \text{and} \qquad \supp_{\base{B}} \Line(p,0)
\]
From Proposition~\ref{prop:maxiconsistent-disjunction} on $\Line(p,0)$ we get that 
\[
\Elt(p,0,t) \qquad \text{ and } \supp_{\base{B}} \Ax(t)
\]
for some term $t$. As $\Ax$ is $\Delta_0$, from Proposition~\ref{prop:equationally-complete}, we have $\vdash_{\pa} \Ax(t)$. 

As $\Elt(p,0,\ulcorner\varphi\urcorner)$ is also $\Delta_0$, from Proposition~\ref{prop:equationally-complete} we also get $\vdash_{\pa} \Elt(p,0,\ulcorner\varphi\urcorner)$. By functionality of sequence encoding (\textbf{Seq}), we have that $t = \ulcorner\varphi\urcorner$. By correctness of the rules (\textbf{Rule}), we obtain $\varphi = \alpha$ for some $\alpha \in \pa$. Hence, as $\base{B}$ supports the axioms of $\pa$, we have $\supp_{\base{B}} \varphi$.

\smallskip
\noindent\textbf{Inductive step:} Assume $n>1$ and, as \emph{induction hypothesis} (IH), that the claim holds for all
proofs of length $k<n$; that is, if
\[
\supp_{\base{B}} \Prf_{\pa}(p', \ulcorner \varphi' \urcorner) \qquad \text{ and } \qquad \supp_{\base{B}} \Seq(p',\bar{k}) \text{ with } k<n, 
\]
then 
\[
\supp_{\base{B}} \psi
\]
From (4) we obtain
$\supp_{\base{B}}\Line(p,\overline{n-1})$. 
Unfolding the definition and using Proposition~\ref{prop:maxiconsistent-disjunction}, there are terms
$i,j$ and $y_1,y_2,y$ such that
\[
\supp_{\base{B}}\Elt(p,\overline{n-1},y)
\]
and one of the following holds:
\begin{itemize}
\item $\supp_{\base{B}}\Ax(y)$; or
\item $\supp_{\base{B}}
\Elt(p,i,y_1)\wedge \Elt(p,j,y_2)\wedge \MP(y_1,y_2,y)$; or
\item $\supp_{\base{B}}
\Elt(p,j,y_1)\wedge \Gen(y_1,y)$.
\end{itemize}
As these are all $\Delta_0$-formulae, by Proposition~\ref{prop:equationally-complete}, we have that
\[
\vdash_{\pa} \Elt(p,\overline{n-1},y)
\]
and one of the following holds:
\begin{itemize}
\item $\vdash_{\pa}\Ax(y)$; or
\item $\vdash_{\pa}\Elt(p,i,y_1)\wedge \Elt(p,j,y_2)\wedge \MP(y_1,y_2,y)$; or
\item $\vdash_{\pa}
\Elt(p,j,y_1)\wedge \Gen(y_1,y)$.
\end{itemize}
From $\vdash_{\pa} \Elt(p,\overline{n-1},y)$ and (3) it follows from (\textbf{Seq}) that $y= \ulcorner \varphi \urcorner$. We now consider each disjunct in turn and show that $\supp_{\base{B}} \phi$ holds: 
\begin{itemize}
    \item We reason exactly as in the base case.
    \item By invertibility of conjunction, have that
\[
\vdash_{\pa} \Elt(p, i, y_1),\qquad \vdash_{\pa}\Elt(p, j, y_2),\qquad \vdash_{\pa}\MP( y_1, y_2,\ulcorner\varphi\urcorner).
\]
By the correctness of the encoding (\textbf{Rule}), there is a formula $\psi$ such that
$ y_1=\ulcorner\psi\urcorner$ and $ y_2=\ulcorner\psi\to\varphi\urcorner$. Let
$p_1:=\Pre(p, i+\bar{1})$ and $p_2:=\Pre(p, j+\bar{1})$; by the prefix closure on proofs (\textbf{Pref}), 
\[
\vdash_{\pa} \Prf(p_1,\ulcorner\psi\urcorner)
\quad\text{and}\quad
\vdash_{\pa} \Prf(p_2,\ulcorner\psi\to\varphi\urcorner)
\]
By local soundness (Proposition~\ref{prop:local-soundness}),
\[
\supp_{\base{B}}  \Prf(p_1,\ulcorner\psi\urcorner)
\quad\text{and}\quad
\supp_{\base{B}} \Prf(p_2,\ulcorner\psi\to\varphi\urcorner)
\]
By the length of prefixes ($\textbf{Len}$), the length of $p_1$ and $p_2$ are less than the length of $p$. Applying IH to each, we obtain
$\supp_{\base{B}}  \psi$ and $\supp_{\base{B}} \psi\to\varphi$. The desired result obtains by \emph{modus ponens}.

\item This is analogous to the previous case. Using \textbf{Rule}, there is some $\psi$ with
$y_1=\ulcorner\psi\urcorner$ such that $\varphi=\forall x \psi$.  From \textbf{Pref} and \textbf{Len}, we obtain a 
proof of $\psi$. Using Proposition~\ref{prop:local-soundness}, we move back to $\base{B}$. The induction hypothesis yields $\supp_{\base{B}}\psi$; and the
generalization yields $\supp_{\base{B}}\varphi$.
\end{itemize}
This completes the induction.
\end{proof}

\begin{corollary}
    $\pa \supp \Con(\pa)$
\end{corollary}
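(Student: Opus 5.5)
The corollary should follow by specialising Theorem~\ref{thm:reflection} to the sentence $\bot$ and then unfolding the abbreviation $\Con(\pa) := \neg\Prov_{\pa}(\ulcorner\bot\urcorner)$. Taking $\varphi := \bot$ in Theorem~\ref{thm:reflection} gives
\[
\pa \supp \Prov_{\pa}(\ulcorner\bot\urcorner) \to \bot .
\]
Since $\neg\psi$ is defined as $\psi\to\bot$, the right-hand formula is literally the sentence $\Con(\pa)$, so nothing beyond a syntactic identification is needed.

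I will spell out that identification so the reliance on the definitions is explicit. Validity means $\pa \supp_{\emptyset} \Con(\pa)$. By (Inf), this says: for every base $\base{C}$ supporting all axioms of $\pa$, we have $\supp_{\base{C}} \Prov_{\pa}(\ulcorner\bot\urcorner)\to\bot$. By the clause ($\to$), that in turn says $\Prov_{\pa}(\ulcorner\bot\urcorner) \supp_{\base{C}} \bot$. This is exactly the content delivered by the instance of Theorem~\ref{thm:reflection}. If one prefers to re-run the argument directly, the route is as follows. Suppose $\base{C}\supseteq\emptyset$ supports $\pa$ and $\Prov_{\pa}(\ulcorner\bot\urcorner)$ but not $\bot$. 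Extend $\base{C}$ to a maxiconsistent base via Proposition~\ref{prop:maxiconsistent-extension}. Then the induction on the length of the coded proof from Theorem~\ref{thm:reflection} yields support of $\bot$, which is a contradiction.

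The only point requiring care is that in the case $\varphi=\bot$ the theorem concludes $\supp_{\base{B}}\bot$ for a base assumed consistent. That is precisely the contradiction we want, so no separate case is needed.
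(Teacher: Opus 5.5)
Your proposal is correct and follows the paper's route: the corollary is the instance $\varphi:=\bot$ of Theorem~\ref{thm:reflection}, because $\Con(\pa)=\neg\Prov_{\pa}(\ulcorner\bot\urcorner)$ is by definition $\Prov_{\pa}(\ulcorner\bot\urcorner)\to\bot$. The paper makes the same identification, $\pa\supp\Con(\pa)$ iff $\pa,\Prov_{\pa}(\ulcorner\bot\urcorner)\supp\bot$, and your further unfolding through (Inf) and ($\to$) is accurate but adds nothing beyond that instantiation.
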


It is important to emphasize that the foregoing result does not conflict with G\"odel's incompleteness theorems. Since the object-language signature 
\[
\sigma=\langle 0,S,+,\cdot\rangle
\]
is finite, the soundness and completeness result for support (Theorem~\ref{thm:snc}) does not apply. In particular, it is the completeness direction that fails. Thus the present separation between derivability and support does not amount to a derivation of $\mathrm{Con}(A)$, nor does it undermine G\"odel's theorem; rather, it reflects a divergence between proof and semantics internal to the theory. In this sense, the phenomenon exhibited here is a semantics--proof gap rather than a truth--theory gap of the familiar model-theoretic kind.

One may ask what becomes of the analysis if we instead work in an infinitary signature
\[
\tau=\langle (c_i)_{i\in\mathbb{N}}, S,+,\cdot\rangle
\]
containing infinitely many constants. In such a setting the conditions for equational completeness become substantially more demanding. In particular, standard arithmetical theories of interest, such as $\mathsf{Q}$ or $\mathsf{PA}$, cease to be decide equalities. In particular Proposition~\ref{prop:equationally-complete} fails for one may construct a consistent base $\base{B}$ that supports all axioms of the theory together with rules
\[
\infer{c_i=0}{}
\]
for each constant $c_i$; see Gheorghiu~\cite{gheorghiu2026arithemtic}. This base remains consistent while supporting equations not derivable in the underlying theory, showing that equational completeness fails. Consequently, the analysis given above no longer applies in this setting.

A natural attempted repair is to enforce equational completeness by stipulating that all additional constants denote zero. Concretely, one may extend $\mathsf{Q}$ or $\mathsf{PA}$ by axioms $c_i=c_0$ for each new constant $c_i$, yielding expanded theories $\mathsf{Q}^\ast$ and $\mathsf{PA}^\ast$. These enriched theories recover equational completeness. However, the soundness and completeness theorem for support then fails for a different reason: the surrounding semantic signature no longer contains a sufficient reserve of unused constants to sustain the completeness argument. In effect, once all constants are fixed by the theory itself, the open-endedness required for the soundness and completeness result is lost.

These observations help to locate precisely where the present analysis applies. The separation between derivability and support arises in the austere, finitely generated signatures in which arithmetic is ordinarily formulated. When the signature is expanded so as to restore completeness of the support semantics, the equational completeness conditions required for the reflection argument cease to hold, and conversely. The phenomenon described here therefore depends essentially on the interaction between a fixed, finite signature and the inferential determination of its terms.

\section{Discussion} \label{sec:discussion}

The central result of this paper establishes that for a sufficiently strong,
primitive recursive arithmetic theory $\pa$ one has
\[
  \not\vdash_{\pa} \Con(\pa) \quad \text{but} \quad \pa \Vdash \Con(\pa).
\]
This establishes a principled separation between two notions of consequence
associated with $\pa$: derivability, which operates deductively within the proof
system, and semantic support, which is fixed by the inferential roles the
theory's axioms assign to its non-logical vocabulary. The purpose of this
section is to assess the conceptual significance of this separation, situate it
within the relevant literature, and indicate its broader implications and
limitations.

We work within Dummett's programme in which the problems of metaphysics are
addressed by establishing a theory of meaning. We take Sandqvist's
proof-theoretic semantics for classical logic as the formal development of such
a theory. In this setup, $\pa$ fixes the \emph{meanings} of the non-logical
vocabulary in the signature --- $0$, $S$, $+$, and $\times$ --- through the roles
these symbols play in inference. Just as the semantics of a logic is not the
same as derivability in that logic, the meaning conferred by $\pa$ on these
symbols is not the same as what can be derived from them.

The separation between derivability and support is not, on this reading, an
artifact of stipulation; it is a clarification of a boundary already latent in
arithmetical practice---the boundary between semantics and derivability.
Goldbach's Conjecture, for instance, is the statement that any even number can
be expressed as the sum of two primes ($\forall x\, G(x)$): for any natural
number $n$, there exist primes $p_1$ and $p_2$ such that $2n = p_1 + p_2$. Its
meaning is given by the totality of instances $G(n)$, ranging over all $n$. If
our mathematical resources are suitably constrained, it may be that no single
proof can deliver the universal generalisation; instead, countably many proofs
may be required to handle the cases of $G(n)$ severally.

The result $\pa \Vdash \Con(\pa)$ alongside $\not\vdash_{\pa} \Con(\pa)$ is
another manifestation of this separation. It asserts that $\Con(\pa)$ is
`true' by the inferential meaning of the non-logical vocabulary as determined
by $\pa$, while not being derivable within it. In the semantic judgment, $\pa$
plays the role that $\mathbb{N}$ plays in model-theoretic consequence:
$\mathbb{N} \models \Con(\pa)$ asserts that $\Con(\pa)$ is `true' by the
denotational meaning of the non-logical vocabulary as determined by
$\mathbb{N}$. The significance of the present approach is that taking $\pa$
itself as determining meaning resists the metaphysical inflation involved in
demanding that $\mathbb{N}$ exists as an independently given structure and that
we have cognitive access to the truths it contains.

The more general result underlying the corollary --- the reflection principle
$\pa \Vdash \Prov(\ulcorner \phi \urcorner) \to \phi$ --- is the main
contribution of this paper. That reflection obtains is, we have argued, a
formal expression of Dummett's notion of arithmetic as indefinitely
extensible~\cite{dummett1963}. As such, it enables a more precise engagement
with the epistemological challenges to Dummett's position, and in particular
with the objection pressed by Wright~\cite{wright1994}.

Wright argued that any informal demonstration of $\Con(\pa)$ depends on prior,
non-circular grounds for consistency, and that absent such grounds one has at
most a conditional --- if $\pa$ is consistent, then $\Con(\pa)$ --- rather than a
free-standing argument. The present result does not answer this challenge
directly, but it allows the challenge to be located with greater precision than
was previously possible.

Wright's dilemma presupposes that any consistency argument must be either
\emph{suasive} --- genuinely informative, providing new grounds for confidence in
$\pa$'s soundness from outside the theory --- or \emph{non-suasive}, merely
drawing out what is already implicit in accepting $\pa$'s axioms, and therefore
epistemically inert. This framing, however, is tailored to a model-theoretic
conception of meaning, in which consistency is a property of a theory evaluated
against something external to it. Once meaning is taken to be constituted by
inferential role rather than by correspondence to an independently given
structure, the dilemma loses its grip: there is no external standard against
which to measure suasiveness, and the distinction between unpacking axioms and
determining semantic consequences comes apart.

The result $\pa \Vdash \Con(\pa)$ occupies neither horn straightforwardly. By
the clause for implication in the support semantics, evaluating a consequence
of the meanings fixed by $\pa$ requires considering extensions of
it. Accordingly, we identify $\pa \Vdash \Con(\pa)$ with the
incoherence of extending $\pa$ by the claim that there exists a proof of
absurdity --- that is,
\[
  \pa \Vdash \Con(\pa) \quad \text{iff} \quad \pa,\, \Prov(\ulcorner \bot
  \urcorner) \Vdash \bot.
\]
This is not a claim imported from outside the theory, nor is it a trivial
unpacking of the axioms. It is rather the semantic consequence of what numbers
\emph{mean} within $\pa$: asserting the existence of a number coding a proof of
contradiction is incoherent given the inferential roles that fix the concept of
number in the first place. Whether this constitutes a genuine third position
that escapes Wright's dilemma, or whether on closer inspection it collapses
into the non-suasive horn, is a question that falls to Wright's camp to
adjudicate. What the present result establishes is that the dilemma cannot
simply be assumed to be exhaustive in the proof-theoretic setting.

What we learn is that $\Con(\pa)$ is `true' according to $\pa$ just in case 
$\Prov(\ulcorner \bot\urcorner)$ is inconsistent with $\pa$. To answer 
Wright's challenge is therefore at least to explain how this inconsistency is 
established, whether suasively or non-suasively. This turns on an analysis of 
the mathematical resources employed in the proof of 
Theorem~\ref{thm:reflection}. We are \emph{not} restricted to the resources 
of $\pa$ itself; such a restriction would be circular, amounting to an attempt 
to show $\vdash_{\pa} \Con(\pa)$. Dummett's programme stems from the thesis 
that language and logic are prior to 
metaphysics~\cite{dummett1991logicalbasis}; accordingly, our resources are not to be limited by a theory of arithmetic but rather by a theory of meaning. 

Dummett's is clear that the neutral position is constructive; that is, we may use constructive reasoning as given by, for example, intuitionistic logic to reason about our theory of meaning. In ``Justification of Deduction'' he calls the justification of classical modes of inference the `one of the most fundamental and intractable problems... in all
philosophy'~\cite{dummett1978justification}. It is precisely this problem that is addressed by Sandqvist's proof-theoretic semantics with which we have been working. Crucially, 
Sandqvist~\cite{Sandqvist2005inferentialist} takes care to establish this 
semantics constructively. As we are consciously adopting this semantics, we are indeed entitled to all of its resources (including classical modes of inference).  

This contrasts instructively with the consistency proof of $\mathsf{PA}$ developed by
Gheorghiu~\cite{gheorghiu2026arithemtic}, which
is a constructive existence proof in the sense Dummett's programme requires:
one constructs explicitly a base $\base{A}$ supporting the axioms of $\pa$
(i.e., $\Vdash_{\base{A}} \alpha$ for each $\alpha \in \mathsf{PA}$) and shows
by induction that it is consistent (i.e., $\not\Vdash_{\base{A}} \bot$). This
does not conflict with G\"{o}del's incompleteness theorems: the argument
assumes that the support relation is already defined, and the definition of
support exceeds what can be expressed or established from within $\mathsf{PA}$
itself. This is precisely as it should be if logic precedes theory; the
semantic framework belongs to the prior logical layer, not to the arithmetical
theory it is used to evaluate.

The analysis of this paper applies specifically to arithmetical theories
formulated in a fixed, finite signature. Within this constraint, classical
derivability is incomplete with respect to the support semantics, and
G\"{o}del's incompleteness theorems emerge as an instance of
semantics-derivability incompleteness rather than truth-theory incompleteness.
Several directions for further work present themselves.

On the technical side, it would be natural to examine whether other undecidable
sentences---including G\"{o}del sentences and instances of uniform
reflection---are supported in the present sense, and to explore the interaction
between support and the ordinal analysis of reflection principles. Since
Kreisel~\cite{Kreisel1965}, reflection principles have been studied as
articulations of what is implicitly accepted when a theory is taken to be
correct. Subsequent work by
Feferman~\cite{Feferman1962,Feferman_1975} developed these ideas through
iterated reflection and autonomous progressions, showing that the
proof-theoretic strength of extensions of $\mathsf{PA}$ obtained by iterating
reflection coincides with that of predicative analysis. In these approaches,
reflection is typically justified by meta-theoretic or epistemic considerations
and corresponds to a genuine strengthening of the underlying theory. The
present framework raises the question of whether such strengthenings can be
recast as the unfolding of semantic content already implicit in the base
theory, rather than as genuinely new commitments.

It would also be of interest to extend the analysis to fragments of
second-order logic~\cite{gheorghiu2025prooftheoreticsemanticssecondorderlogic},
where the concerns raised by V\"{a}\"{a}n\"{a}nen~\cite{vaananen2012second}
about the set-theoretic presuppositions of full second-order logic become
directly relevant.

\bibliographystyle{asl}
\bibliography{bib}

\end{document}

%% file: macros.tex
\usepackage{graphicx} 
\usepackage{amssymb}
\usepackage{proof}
\usepackage{enumitem}

\newtheorem{definition}{Definition}
\newtheorem{example}[definition]{Example}

\newtheorem{corollary}[definition]{Corollary}
\newtheorem{proposition}[definition]{Proposition}
\newtheorem{theorem}[definition]{Theorem}

\newcommand{\setVar}{\mathcal{V}}

\newcommand{\setTerm}{\mathcal{T}}
\newcommand{\closure}{\textsc{cl}}
\newcommand{\setClosedTerm}{\closure(\setTerm)}
\newcommand{\setClosedAtom}
{\closure(\setAtoms)}
\newcommand{\setClosedFormulas}{\closure(\setFormulas)}
\newcommand{\setAtoms}{\mathbb{A}}

\newcommand{\setFormulas}{\mathbb{F}}

\newcommand{\proves}{\vdash}
\newcommand{\supp}{\Vdash}

\newcommand{\base}[1]{\mathfrak{#1}}
\newcommand{\mbase}[1]{{\mathfrak{#1}^\ast}}

\usepackage{color}

\renewcommand{\phi}{\varphi}
\renewcommand{\epsilon}{\varepsilon}
\renewcommand{\emptyset}{\varnothing}

\newcommand{\pa}{\textsf{A}}

\newcommand{\Prf}{\textsf{Prf}}

\newcommand{\Line}{\textsf{Line}}
\newcommand{\Form}{\textsf{Form}}
\newcommand{\Ax}{\textsf{Ax}}

\newcommand{\Seq}{\textsf{Seq}}
\newcommand{\MP}{\textsf{MP}}
\newcommand{\Gen}{\textsf{Gen}}

\newcommand{\Prov}{\textsf{Prov}}
\newcommand{\Con}{\textsf{Con}}
\newcommand{\Elt}{\textsf{Elt}}

\newcommand{\Pre}{\textsf{Pref}}